\newtheorem{theorem}{Theorem}
\newtheorem*{theorem*}{Theorem}
\newtheorem{remark}{Remark}
\newtheorem*{remark*}{Remark}
\newtheorem{lemma}{Lemma}
\newtheorem*{lemma*}{Lemma}
\newtheorem*{example*}{Example}
\newtheorem*{proposition*}{Prop}
\newtheorem*{question*}{Quest}
\newtheorem*{assume*}{Assume}
\newtheorem*{claim*}{claim}
\newtheorem{corollary}{Corollary}
\newtheorem*{corollary*}{Corollary}
\DeclareMathOperator{\Ric}{Ric}
\newcommand{\R}{\mathbb{R}}
\newcommand{\ez}{\varepsilon}
\newcommand{\Lap}{\Delta}
\newcommand{\bs}{\ensuremath{\mathbf{s}}}
\newcommand{\e}{\mathrm{e}}
\renewcommand{\d}{\mathrm{d}}
\newcommand{\eps}{{\varepsilon}}
\def\Cut{\mathop{\mathrm{Cut}}\nolimits}
\title{Aronson-B\'{e}nilan gradient estimates for porous medium equations under lower bounds of $N$-weighted Ricci curvature with $N < 0$}
\author{Yasuaki Fujitani\thanks{Department of Mathematics, Osaka University, Osaka 560-0043, Japan (\texttt{u197830k@ecs.osaka-u.ac.jp})}}
\begin{document}
\maketitle
\begin{abstract}
    The Aronson-B\'{e}nilan gradient estimate for the porous medium equation has been studied as a counterpart to the Li-Yau gradient estimate for the heat equation. 
    In this paper, we give the Aronson-B\'{e}nilan gradient estimates for the porous medium equation on weighted Riemannian manifolds under lower bounds of $N$-weighted Ricci curvature with $\eps$-range for some $N < 0$. 
    This is a generalization of those estimates under constant lower $N$-weighted Ricci curvature bounds with $N\in [n,\infty)$.
\end{abstract}
\tableofcontents
\section{Introduction}
The porous medium equation $\partial_t u = \Delta_\psi u^m$ $(m > 1)$ is a nonlinear deformation of the heat equation and has been studied intensively from both analytic and geometric viewpoints (see \cite{vazque1}).
One of the pioneering works on the porous medium equation is the Aronson-B\'{e}nilan estimate on Euclidean space in 1979 \cite{Aronson}.
This estimate was extended to compact Riemannian manifolds in \cite{vazque1}. 
Then, the local Aronson-B\'{e}nilan gradient estimate on Riemannian manifolds was obtained in \cite{villani} by extending the Li-Yau gradient estimate for the heat equation under lower bounds of the Ricci curvature.
The local estimate in \cite{villani} was used to generalize the Aronson-B\'{e}nilan estimate in \cite{vazque1} to noncompact complete Riemannian manifolds and they were improved by \cite{huang-huang-li}.
The local Aronson-B\'{e}nilan gradient estimates are closely related to comparison geometry concerning the lower bounds of the Ricci curvature in that those estimates are proved using the Laplacian comparison theorem. 

This paper is concerned with the weighted Ricci curvature $\Ric_\psi^N$, which is a generalization of the Ricci curvature for weighted Riemannian manifolds. Here, $N$ is a real parameter called the effective dimension and $\psi$ is a weight function.
Many comparison geometric results similar to those for Riemannian manifolds with Ricci curvature bounded from below by $K$ and dimension bounded from above by $N$ also hold if we assume the condition $\Ric_\psi^N\geq K$ for $K\in\mathbb{R}$.
Especially the case of $N\geq n$ is well investigated. 
For example, the local Aronson-B\'{e}nilan gradient estimates on Riemannian manifolds with $\Ric_g \geq -K$ for $K\geq 0$ in \cite{huang-huang-li} were generalized to weighted Riemannian manifolds with $\Ric_\psi^N \geq -K$ for $K\geq 0$ and $N \geq n$ in \cite{huang}. 
We also refer to \cite{hqiu} for a further generalization to the $V$-Laplacian.

Recently, the $N$-weighted Ricci curvature in the case of $N \in (-\infty,1]$ is getting more and more attention.
For this range, some Poincar\'{e} inequalities \cite{milman} (we refer to \cite{mai} for its rigidity), Beckner inequality \cite{gentil} and the curvature-dimension condition \cite{ohta_negative} were established. 
Contrary to the case $\Ric_\psi^N\geq K$ for $N\in[n,\infty)$, it is known that some comparison theorems (such as the Bishop-Gromov volume comparison theorem and the Laplacian comparison theorem) do not hold under the constant curvature bound $\Ric_\psi^N\geq K$ for $N\in(-\infty,1]\cup \{\infty\}$.
Nonetheless, Wylie-Yeroshkin \cite{yoroshikin} introduced a variable curvature bound 
\begin{equation*}
    \Ric_\psi^1 \geq K\e^{-\frac{4}{n-1}\psi}g
\end{equation*}
associated with the weight function $\psi$, and established several comparison theorems. They were then generalized to 
\begin{equation*}
    \Ric_\psi^N\geq K\e^{\frac{4}{N-n}\psi}g
\end{equation*}
with $N\in (-\infty, 1)$ by Kuwae-Li \cite{kuwae_li}. Furthermore, Lu-Minguzzi-Ohta \cite{lu2} gave a generalization of them as
\begin{equation*}
    \Ric_\psi^N\geq K\e^{\frac{4(\eps-1)}{n-1}\psi}g
\end{equation*}
with an additional parameter $\eps$ in an appropriate range, depending on $N$, called the $\eps$-range (we also refer to \cite{lu} for a preceding work on singularity theorems in Lorentz-Finsler geometry).
This is not only a generalization of \cite{yoroshikin} and \cite{kuwae_li}, but also an integration of both constant and variable curvature bounds by choosing appropriate $\eps$.
We refer to \cite{kuwae_sakurai,kuwae_sakurai_2,kuwae_sakurai_3,yasuaki} for further works on the $\eps$-range.

{In this paper, we study gradient estimates for the porous medium equation under lower bounds of $N$-weighted Ricci curvature with $N < 0$. In the region where $N$ is negative, we give a local Aronson-B\'{e}nilan gradient estimate in terms of $\eps$-range and the Aronson-B\'{e}nilan estimate on compact Riemannian manifolds with non-negative weighted Ricci curvature. 
Our results are generalizations of those estimates in \cite{huang,lili-2} for $N\in[n,\infty)$ to $ N\in (-\infty, -\frac{2}{m-1})\cup \{\infty\}$.}
%Our main theorem, Theorem \ref{aronson-benilan-epsilon-range}, is a generalization of \cite[Theorem 1]{hqiu} for $N\in[n,\infty)$ to $N = \infty$ and some $N \in (-\infty, -\frac{1}{m-1})$.

The Aronson-B\'{e}nilan estimate under lower bounds of $N$-weighted Ricci curvature with $N < 0$
offers a new perspective to the Li-Yau gradient estimate for the heat equation under lower bounds of $N$-weighted Ricci curvature with $N < 0$, which is known as an open question as mentioned in \cite[Section 1 (b)]{ohta_negative}, \cite[Remark 14.16]{ohta-finsler}.
Letting $m \searrow 1$, the porous medium equation becomes the heat equation and the local Aronson-B\'{e}nilan gradient estimate recovers the Li-Yau gradient estimate under $\Ric_\psi^N \geq -K$ ($K\geq 0, N \geq n$).
However, our local gradient estimate is available for $N \in (-\infty,-\frac{2}{m-1})$ and, as $m\searrow 1$, $-\frac{2}{m-1}$ goes to $-\infty$ and the region $(-\infty, -\frac{2}{m-1})$ degenerates.
From this observation, the usual Li-Yau gradient estimate may not extend to $N < 0$ and we need an alternative approach (see also Remark \ref{fast-diffusion}).
%$\frac{2}{1-m}$ goes to $(-\infty)$ when $m\searrow 1$. 

This paper is organized as follows. In Section 2, we briefly review the weighted Ricci curvature with $\eps$-range and the Aronson-B\'{e}nilan gradient estimates. In Section 3, we give the local Aronson-B\'{e}nilan gradient estimate under lower bounds of the weighted Ricci curvature with $\eps$-range.
In Section 4, we give the Aronson-B\'{e}nilan estimate on compact Riemannian manifolds with non-negative weighted Ricci curvature.
% we study gradient estimates of the fast diffusion equations in Section 3 and porous medium equations in Section 4. In Section 5, we discuss our future work.
\section{Preliminaries}
\subsection{$\varepsilon$-range}
Let $(M,g,\mu)$ be an $n$-dimensional weighted Riemannian manifold.
%We assume that $M$ is non-compact in this paper.
We set $\mu = \e^{-\psi}v_g$ where $v_g$ is the Riemannian volume measure and $\psi$ is a $C^{\infty}$ function on $M$.
For $N\in(-\infty,1]\cup[n,+\infty]$, the \emph{$N$-weighted Ricci curvature} is defined as follows:
\begin{align*}
\Ric_\psi^N:=\Ric_g+{\rm \nabla^2}\psi-\frac{\d\psi\otimes \d\psi}{N-n},
\end{align*}
where when $N=+\infty$,
the last term is interpreted as the limit $0$
and when $N=n$,
we only consider a constant function $\psi$, and
set $\Ric_\psi^n:=\Ric_g$.
 
In \cite{lu2}, they introduced the notion of \textit{$\eps$-range}:
\begin{equation}\label{epsilin-range}
\eps = 0 \mbox{ for } N = 1, \quad |\eps| < \sqrt{\frac{N-1}{N-n}} \mbox{ for } N\neq 1,n, \quad \eps\in \mathbb{R}\mbox{ for } N = n.
\end{equation}
In this $\eps$-range, for $K\in\mathbb{R}$, they considered the condition
\[ \Ric_\psi^N(v)\ge K \e^{\frac{4(\ez-1)}{n-1}\psi(x)} g(v,v),\quad v\in T_xM.\]
We also define a constant $c$ associated with $\eps$ as
\begin{equation}\label{c_num}
c = \frac{1}{n-1}\left(1- \eps^2\frac{N-n}{N-1}\right) > 0
\end{equation}
for $N\neq 1$ and $c = (n-1)^{-1}$ for $N = 1$.
We define the comparison function $\bs_{\kappa}$ as
\begin{equation}\label{eq:bs}
\bs_{\kappa}(t) := \begin{cases}
\frac{1}{\sqrt{\kappa}} \sin(\sqrt{\kappa}t) & \kappa>0, \\
t & \kappa=0, \\
\frac{1}{\sqrt{-\kappa}} \sinh(\sqrt{-\kappa}t) & \kappa<0.
\end{cases}
\end{equation}
We denote $B(x,r)=\{ y \in M \,|\, d(x,y)<r \}$. %$V(x,r) = \mu(B(x,r))$ %and $tB = B(x,tr)$ if $B = B(x,r)$.
The \emph{weighted Laplacian} $\Delta_\psi$ on $(M,g,\mu)$ is defined as 
\begin{equation*}
    \Delta_\psi = \Delta - \langle \nabla\psi,\cdot \rangle.
\end{equation*}
\begin{theorem}(\cite[Theorem 3.9]{lu2}, Laplacian comparison theorem)\label{laplacian_comparison_theorem}
    Let $(M, g, \mu)$ be an $n$-dimensional complete weighted Riemannian manifold and $N \in (-\infty,1] \cup [n,+\infty]$,
    $\ez \in \R$ in the $\ez$-range \eqref{epsilin-range}, $K \in \R$ and $p_2 \ge p_1>0$.
    Assume that
    \[ \Ric_\psi^N(v)\ge K \e^{\frac{4(\ez-1)}{n-1}\psi(x)} g(v,v)\]
    holds for all $v \in T_xM \setminus 0$ and
    \[ p_1 \le \e^{-\frac{2(\ez-1)}{n-1}\psi} \le p_2. \]
    Then, for any $z \in M$, the distance function $r(x):=d(z,x)$ satisfies
    \[ \Lap_{\psi} r(x) \le \frac{1}{c\rho} \frac{\bs'_{cK}(r(x)/p_2)}{\bs_{cK}(r(x)/p_2)} \]
    on $M \setminus (\{z\} \cup \Cut(z))$, where $\rho:=p_1$ if $\bs'_{cK}(r(x)/p_2) \ge 0$
    and $\rho:=p_2$ if $\bs'_{cK}(r(x)/p_2)<0$, and $\Cut(z)$ denotes the cut locus of $z$.
\end{theorem}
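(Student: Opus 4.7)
The plan is to reduce the weighted Laplacian bound to a one-dimensional Riccati comparison along minimizing geodesics emanating from $z$, via a combined conformal rescaling of $\Delta_\psi r$ and a reparametrization of arclength. The parameter $\eps$, the factor $\e^{\frac{4(\eps-1)}{n-1}\psi}$ in the curvature hypothesis, and the two-sided bound on $\e^{-\frac{2(\eps-1)}{n-1}\psi}$ are designed so that after this change of variables the variable curvature term becomes the constant $K$ and the reparametrized arclength lies in an interval controlled by $p_1$ and $p_2$.

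Fix $x \notin \{z\}\cup\Cut(z)$ and let $\gamma:[0,r(x)]\to M$ be the unit-speed minimizing geodesic from $z$ to $x$. Set $\phi(t):=\psi(\gamma(t))$, $L(t):=\Delta r(\gamma(t))$, and $h(t):= \Delta_\psi r(\gamma(t)) = L(t)-\phi'(t)$. The Bochner identity for the distance function $r$, combined with $|\nabla^2 r|^2 \geq L^2/(n-1)$ (since $\nabla^2 r(\gamma',\cdot)=0$) and the definition $\Ric = \Ric_\psi^N - \nabla^2\psi + \d\psi\otimes\d\psi/(N-n)$, yields the Riccati-type inequality
\begin{equation*}
h'(t) + \frac{L(t)^2}{n-1} + \frac{\phi'(t)^2}{N-n} + \Ric_\psi^N(\gamma',\gamma') \leq 0.
\end{equation*}
Substituting $L = h+\phi'$, the quadratic form $(h+\phi')^2/(n-1) + \phi'^2/(N-n)$ in $(h,\phi')$ can, by completing the square, be written as $c\,\tilde h^2$ plus a total-derivative term in $t$, where $\tilde h := h + \lambda(\eps,N,n)\phi'$ for a suitable scalar $\lambda$ and $c>0$ is the constant \eqref{c_num}; the positivity of $c$ is precisely the $\eps$-range condition \eqref{epsilin-range}. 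Introducing the new parameter $s$ via $ds = \e^{-\frac{2(\eps-1)}{n-1}\phi}\,dt$ and an appropriate conformal rescaling $u(s)$ of $\tilde h$, the Riccati inequality becomes constant-coefficient: using $(dt/ds)^2 = \e^{\frac{4(\eps-1)}{n-1}\phi}$, the curvature hypothesis $\Ric_\psi^N(\gamma',\gamma') \geq K\e^{\frac{4(\eps-1)}{n-1}\phi}$ converts the variable curvature term into a constant $K$, and one obtains
\begin{equation*}
\frac{du}{ds} + c\,u(s)^2 + K \leq 0.
\end{equation*}

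Since $u \sim 1/s$ near $s=0$ (reflecting $\Delta r \sim (n-1)/t$), standard one-dimensional Sturm comparison gives $c\,u(s) \leq \bs'_{cK}(s)/\bs_{cK}(s)$ on the interval where $\bs_{cK}$ is positive. The assumed bounds $p_1 \leq \e^{-\frac{2(\eps-1)}{n-1}\psi} \leq p_2$ yield $r(x)/p_2 \leq s(r(x)) \leq r(x)/p_1$; since $\bs'_{cK}/\bs_{cK}$ is monotone decreasing wherever defined, substituting $s = r(x)/p_2$ produces a valid upper bound on $u$, and the rescaling factor $\rho$ in the statement, equal to $p_1$ or $p_2$ according to the sign of $\bs'_{cK}(r(x)/p_2)$, accounts for the factor that appears when one undoes the conformal rescaling from $u$ back to $h$ and uses the opposite endpoint $s=r(x)/p_1$ to recover a sharp bound in the sign-changed regime. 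The main obstacle is the algebraic step identifying simultaneously the splitting $\tilde h = h+\lambda\phi'$, the reparametrization exponent $\frac{2(\eps-1)}{n-1}$, and the conformal rescaling of $\tilde h$ that together absorb the cross term $2h\phi'/(n-1)$, the $\phi'^2$ term, and the variable curvature factor, producing a constant-coefficient Riccati inequality whose leading coefficient is exactly the $c$ in \eqref{c_num}. The secondary subtlety is the final sign-dependent case split on $\rho$, which encodes which of the endpoints $r(x)/p_1, r(x)/p_2$ yields the sharp comparison once one reverts from $s$-arclength to $t$-arclength.
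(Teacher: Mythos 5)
This theorem is not proved in the paper: it is quoted verbatim from Lu--Minguzzi--Ohta \cite[Theorem 3.9]{lu2}, and the present paper only uses it (in the cut-off construction of Section 3). So there is no ``paper's own proof'' to compare against; what can be assessed is whether your sketch would produce a correct proof of the cited statement.

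Your overall strategy---Riccati inequality for $\Delta_\psi r$ along a minimizing geodesic, a conformal rescaling of $\Delta_\psi r$ tied to a reparametrization of arclength, and one-dimensional Sturm comparison against $\bs'_{cK}/\bs_{cK}$---is the right one and does match the argument in \cite{lu2} (which in turn extends Wylie--Yeroshkin). However, two steps in the sketch as written are off. First, the sign of the reparametrization is inconsistent: with your choice $ds=\e^{-\frac{2(\eps-1)}{n-1}\phi}\,dt$ one gets $p_1 r\le s(r)\le p_2 r$, not the $r/p_2\le s(r)\le r/p_1$ that you later invoke and that the conclusion requires. The correct choice is $ds=\e^{\frac{2(\eps-1)}{n-1}\phi}\,dt$, paired with the conformal factor $u=\e^{\frac{2(1-\eps)}{n-1}\phi}\,\Delta_\psi r$; then $(dt/ds)^2=\e^{-\frac{4(\eps-1)}{n-1}\phi}$ exactly cancels the variable factor in the curvature hypothesis, and $ds/dt\in[1/p_2,1/p_1]$ gives the claimed bounds on $s(r)$. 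Second, the assertion that
\[
\frac{(h+\phi')^2}{n-1}+\frac{\phi'^2}{N-n}=c\,\tilde h^2+\text{(a total $t$-derivative)}
\]
is not correct: the discrepancy between the two sides contains a multiple of $h^2$ (namely $(\tfrac{1}{n-1}-c)h^2=\tfrac{\eps^2(N-n)}{(n-1)(N-1)}h^2$) and cannot be a total derivative. No shift $\tilde h=h+\lambda\phi'$ and no hidden exact term are needed. The actual mechanism is that when you differentiate $u=\e^{c_1\phi}h$ in $s$, the product rule produces a cross-term $c_1\phi' h$, and it is this term---not a shift of $h$---that combines with the expansion of $(h+\phi')^2/(n-1)$. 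The resulting quadratic form in $(h,\phi')$,
\[
\Bigl(\tfrac{1}{n-1}-c\Bigr)h^2-\Bigl(c_1-\tfrac{2}{n-1}\Bigr)h\phi'+\tfrac{N-1}{(n-1)(N-n)}\phi'^2,
\]
has vanishing discriminant precisely when $c_1=\tfrac{2(1-\eps)}{n-1}$ and $c$ is the constant \eqref{c_num}, hence is a perfect square; this, not a total-derivative cancellation, is where the $\eps$-range and the value of $c$ are forced. With these corrections the sketch closes: one obtains $u'(s)+c\,u^2+K\le 0$ with $u\sim(n-1)/s$ as $s\to 0^+$ (not $1/s$, but this still lies below the comparison solution since $c\le 1/(n-1)$), Sturm comparison gives $c\,u(s)\le\bs'_{cK}(s)/\bs_{cK}(s)$, and undoing the conformal factor with the two-sided bound $1/p_2\le\e^{-c_1\phi}\le 1/p_1$ yields the statement, with the sign-dependent choice of $\rho$ coming from whether multiplying by $\e^{-c_1\phi}$ should use the upper or lower bound.
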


\subsection{Aronson-B\'{e}nilan estimates for porous medium equation}
The Aronson-B\'{e}nilan estimate is closely related to the Li-Yau estimates for the heat equation:
\begin{equation}\label{heat-eq}
    \partial_t u = \Delta u.
\end{equation}
The following theorem is the local Li-Yau gradient estimate.
\begin{theorem}(Local Li-Yau gradient estimate, \cite{li-yau}). Let $\left(M, g\right)$ be an $n$-dimensional complete Riemannian manifold with $\Ric_g \geq-K$ on $B(p,2R)$ with $K\geq 0$. Suppose that $u$ is a positive 
    solution to the heat equation \eqref{heat-eq} on $B(p,2R) \times[0, T]$. Then for any $\alpha>1$, we have
    \begin{equation}\label{li-yau-gradient-estimate}
        \frac{|\nabla u|^2}{u^2}-\alpha \frac{\partial_t u}{u} \leq \frac{C \alpha^2}{R^2}\left(\frac{\alpha^2}{\alpha-1}+\sqrt{K} R\right)+\frac{n \alpha^2 K}{2(\alpha-1)}+\frac{n \alpha^2}{2 t}
    \end{equation}
    on $B(p,R)\times (0,T]$, where $C$ is a constant depending on $n$.
\end{theorem}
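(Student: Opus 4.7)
The plan is to run the classical Li--Yau argument based on Bochner's formula combined with a localization via a cutoff function.

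First, I would linearize by setting $f = \log u$. Then the heat equation \eqref{heat-eq} becomes $\partial_t f = \Delta f + |\nabla f|^2$, and the quantity to be bounded is exactly
\[
 H := |\nabla f|^2 - \alpha \partial_t f = (1-\alpha)|\nabla f|^2 - \alpha \Delta f.
\]
Using Bochner's formula and the lower bound $\Ric_g \geq -K$, together with $|\nabla^2 f|^2 \geq (\Delta f)^2/n$, I would compute $(\Delta - \partial_t)H$ and express the right-hand side as a sum of: a positive quadratic in $|\nabla f|^2$ and $\Delta f$ (which I rewrite in terms of $H/t$ using the equation), a curvature term of order $-K|\nabla f|^2$, and a transport term $-2\langle \nabla f, \nabla H\rangle$ that will be absorbed later. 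The outcome is a Harnack-type differential inequality of the schematic form
\[
 (\Delta - \partial_t)H + 2\langle \nabla f, \nabla H\rangle \;\geq\; \frac{2t}{n\alpha}\,\frac{H^2}{t^2} - (\text{lower-order in }H, K, \alpha).
\]

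Next I would localize. Following Li--Yau, I pick a smooth cutoff $\eta: M \to [0,1]$ that is a function of $r(x) = d(p,x)$ only, with $\eta \equiv 1$ on $B(p,R)$, $\supp \eta \subset B(p,2R)$, and $-C_1\sqrt{\eta}/R \leq \eta' \leq 0$, $\eta'' \geq -C_2/R^2$. Using the standard Laplacian comparison on $(M,g)$ with $\Ric_g \geq -K$ (which is the unweighted case of Theorem \ref{laplacian_comparison_theorem}) yields
\[
 \Delta \eta \;\geq\; -\frac{C}{R^2}\bigl(1+\sqrt{K}\,R\bigr)
\]
in the barrier/distributional sense, with the Calabi trick handling the cut locus. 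I would then consider $G(x,t) = t\,\eta(x)\, H(x,t)$ on $\overline{B(p,2R)} \times [0,T]$, which vanishes on the parabolic boundary, and assume its maximum is attained at an interior point $(x_0, t_0)$ with $G(x_0,t_0) > 0$ (otherwise the estimate is trivial).

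At $(x_0,t_0)$ the conditions $\nabla G = 0$, $\Delta G \leq 0$, $\partial_t G \geq 0$ give $\nabla H = -(H/\eta)\nabla\eta$ and, after multiplying the differential inequality for $H$ by $t\eta$ and using the product rule, I obtain an algebraic inequality of the form
\[
 0 \;\geq\; \frac{2t_0}{n\alpha}\,\eta(x_0)\, H(x_0,t_0)^2 - \bigl(\text{terms linear in } H(x_0,t_0)\bigr) - \bigl(\text{terms independent of } H\bigr),
\]
where the linear terms contain $|\nabla \eta|^2/\eta \leq C/R^2$, $\Delta\eta$, and $K\alpha$ contributions. Using the standard Young inequality $2\langle \nabla f, \nabla \eta\rangle H \leq \varepsilon \eta |\nabla f|^2 H + \varepsilon^{-1} H |\nabla \eta|^2/\eta$ (with $\varepsilon$ chosen proportional to $(\alpha-1)/\alpha$ so that the $|\nabla f|^2$ piece is absorbed into the quadratic term), I would reduce everything to a quadratic inequality $A H^2 \leq B H + C$ in the unknown $\eta(x_0) H(x_0,t_0)$, and solve to conclude
\[
 \eta H(x_0,t_0) \;\leq\; \frac{n\alpha^2}{2t_0} + \frac{C\alpha^2}{R^2}\Bigl(\tfrac{\alpha^2}{\alpha-1}+\sqrt{K}R\Bigr) + \frac{n\alpha^2 K}{2(\alpha-1)}.
\]
Evaluating at any $(x,T)$ with $x \in B(p,R)$, where $\eta \equiv 1$, gives the claimed bound.

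The main obstacle is the bookkeeping in step two: one must handle simultaneously the $(\alpha-1)$-weighted splitting of $|\nabla f|^2$ versus $\Delta f$, the curvature term $-2K|\nabla f|^2$, and the cutoff cross-term $-2\langle \nabla f, \nabla \eta\rangle H/\eta$, choosing the Young parameters so that the quadratic coefficient in $H$ stays positive with factor $\sim 1/(n\alpha)$. This is precisely where the factors $\alpha^2/(\alpha-1)$ and $\alpha^2 K/(\alpha-1)$ enter, and where the requirement $\alpha > 1$ is essential.
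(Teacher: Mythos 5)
The paper does not prove this theorem; it is quoted directly from Li--Yau \cite{li-yau} as a background result. Your outline --- linearize by $f=\log u$, apply Bochner together with $|\nabla^2 f|^2 \geq (\Delta f)^2/n$ and $\Ric_g\geq -K$, localize with a cutoff $\eta$ whose Laplacian is controlled by the comparison theorem (with the Calabi trick at the cut locus), and run the parabolic maximum principle on $G=t\eta H$ before solving the resulting quadratic in $\eta H$ --- is precisely the classical Li--Yau argument, and it is correct in structure; the only slip is in your schematic differential inequality, where the leading quadratic term should read $\tfrac{2}{n\alpha^2}H^2$ rather than $\tfrac{2t}{n\alpha}\tfrac{H^2}{t^2}$ (no extra factor of $t^{-1}$, and $\alpha^2$ rather than $\alpha$), consistent with the final constants $\tfrac{n\alpha^2}{2t}$ and $\tfrac{n\alpha^2 K}{2(\alpha-1)}$ that you correctly identify.
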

Moreover, letting $R\rightarrow \infty$ in \eqref{li-yau-gradient-estimate}, we have the following Li-Yau estimate.
\begin{corollary}(Li-Yau estimate, \cite{li-yau})
    Let $(M,g)$ be an $n$-dimensional complete noncompact Riemannian manifold with $\operatorname{Ric}_g \geq-K$ with $K \geq 0$. Suppose that $u$ is a positive solution to the heat equation \eqref{heat-eq} on $M\times [0,T]$. Then for any $\alpha > 1$, we have
\begin{equation}\label{li-yau-estimate-alpha}
    \frac{|\nabla u|^2}{u^2}-\alpha \frac{\partial_t u}{u} \leq \frac{n \alpha^2 K}{2(\alpha-1)}+\frac{n \alpha^2}{2 t} 
\end{equation}
on $M\times (0,T]$.
In the special case where $K=0$, there holds 
\begin{equation}\label{li-yau-estimate}
    \frac{|\nabla u|^2}{u^2}-\frac{\partial_t u}{u} \leq \frac{n}{2 t}
\end{equation}
on $M\times (0,T]$.
\end{corollary}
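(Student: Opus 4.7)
The plan is to deduce both statements directly from the local Li-Yau gradient estimate \eqref{li-yau-gradient-estimate}, which has already been stated, by letting $R\to\infty$ and then, in the $K=0$ case, letting $\alpha\searrow 1$.

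First I would fix a point $p\in M$, a time $t_0\in(0,T]$, and an exponent $\alpha>1$. Since $(M,g)$ is complete and $\Ric_g\ge-K$ holds globally on $M$, it holds in particular on every geodesic ball $B(p,2R)$, and the positive solution $u$ of the heat equation is defined on all of $M\times[0,T]$. Thus the hypotheses of the local Li-Yau gradient estimate are satisfied for every $R>0$, and \eqref{li-yau-gradient-estimate} gives
\[
\frac{|\nabla u|^2}{u^2}(p,t_0)-\alpha\frac{\partial_t u}{u}(p,t_0)\le \frac{C\alpha^2}{R^2}\left(\frac{\alpha^2}{\alpha-1}+\sqrt{K}\,R\right)+\frac{n\alpha^2K}{2(\alpha-1)}+\frac{n\alpha^2}{2t_0}.
\]
The crucial feature is that the constant $C$ depends only on the dimension $n$, not on $p$, $R$, or $\alpha$, so the first term on the right side decays as $O(R^{-1})$ when $R\to\infty$. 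Taking this limit at the fixed point $(p,t_0)$ and letting $p,t_0$ vary produces the desired estimate \eqref{li-yau-estimate-alpha} on all of $M\times(0,T]$.

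For the $K=0$ case, I would substitute $K=0$ into \eqref{li-yau-estimate-alpha}, which makes the middle term on the right side vanish:
\[
\frac{|\nabla u|^2}{u^2}-\alpha\frac{\partial_t u}{u}\le\frac{n\alpha^2}{2t}\qquad\text{for every }\alpha>1.
\]
Both sides depend continuously on $\alpha$ on $(1,\infty)$, and the right side tends to $n/(2t)$ as $\alpha\searrow 1$, so letting $\alpha\searrow 1$ yields \eqref{li-yau-estimate}.

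There is essentially no obstacle here: the only things to check are that the $R$-dependent error term in \eqref{li-yau-gradient-estimate} vanishes in the limit (immediate from its explicit form) and that the right side of the $K=0$ version is continuous in $\alpha$ at $\alpha=1$ (immediate). The nontrivial work has already been absorbed into the local estimate cited from \cite{li-yau}.
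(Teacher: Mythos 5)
Your proof is correct and follows precisely the route the paper indicates: apply the local Li-Yau gradient estimate \eqref{li-yau-gradient-estimate} on $B(p,2R)$, observe that the $R$-dependent term is $O(R^{-1})$ and vanishes as $R\to\infty$, and then for $K=0$ let $\alpha\searrow 1$. The paper states this derivation in one sentence ("letting $R\to\infty$"); you have simply supplied the routine details, so there is nothing to add.
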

As a pioneering work, for any positive solution $u$ to $\partial_t u = \Delta u^m$ on $\mathbb{R}^n$ with $m > 1-\frac{2}{n}$, Aronson-Bénilan \cite{Aronson} derived a celebrated second order differential inequality
%As a pioneering work, for any positive solution $u$ of the porous medium equation \eqref{porus_medium_equation} on $\mathbb{R}^n$ with $m>1-\frac{2}{n}$, Aronson-Bénilan \cite{Aronson} derived a celebrated second order differential inequality
\begin{equation}\label{aronson-classical}
\sum_{i=1}^n \frac{\partial}{\partial x^i}\left(m u^{m-2} \frac{\partial u}{\partial x^i}\right) \geq-\frac{\kappa}{t},
\end{equation}
where $\kappa=\frac{n}{n(m-1)+2}$. When $m=1$, the above inequality \eqref{aronson-classical} recovers the classical Li-Yau estimate \eqref{li-yau-estimate}.

This paper is concerned with the porous medium equation for the weighted Laplacian:
\begin{equation}\label{porus_medium_equation}
    \frac{\partial u}{\partial t} = \Delta_\psi u^m\quad (m > 1)
\end{equation}
on $M\times [0,T]$.
The following theorem is a local Aronson-B\'{e}nilan gradient estimate for the porous medium equation under constant lower bounds of the weighted Ricci curvature; we refer to \cite[Theorem 1.1]{huang-huang-li} for the unweighted case and to \cite[Theorem 3.1]{huang}, \cite[Theorem 1]{hqiu} for the weighted case. We generalize the following theorem to the $\eps$-range in Section 3.
%Local Aronson-B\'{e}nilan gradient estimates for the porous medium equation is first obtained in \cite{villani} and then generalized to weighted manifolds as follows.
\begin{theorem}(Local Aronson-B\'{e}nilan  gradient estimate)\label{aronson-benilan-huang}
    Let $(M,g,\mu)$ be an $n$-dimensional complete weighted Riemannian manifold  with $\Ric_\psi^N \geq$ $-K$ on $B(p,2 R)$ with $K \geq 0$ and $N \geq n$. Suppose that $u$ is a positive solution of \eqref{porus_medium_equation} with $m > 1$ on $M\times [0,T]$. Let $v=\frac{m}{m-1} u^{m-1}$ and $L=(m-1)\sup_{B(p,2 R) \times[0, T]} v$. Then for any $\alpha>1$, we have
    \begin{eqnarray}\label{huang-li-estimate-1}
   \frac{|\nabla v|^2}{v}-\alpha \frac{\partial_t v}{v} &\leq& \Biggl\{\frac{a \alpha^2 m L^{1/2}}{(\alpha-1)^{1/2}(m-1)^{1/2}} \frac{C}{R}\nonumber\\
   &&\qquad+a^{1/2} \alpha\left[\frac{1}{t}+\frac{L K}{2(\alpha-1)}+ \frac{CL}{R^2}\left\{1+\sqrt{K}R \operatorname{coth}\left(\sqrt{\frac{K}{N-1}} R\right)\right\}\right]^{1/2}\Biggr\}^2
\end{eqnarray}
on $B(p,R)\times (0,T]$,
   where $a=\frac{N(m-1)}{N(m-1)+2}$ and $C$ is a constant depending on $N$.
   \end{theorem}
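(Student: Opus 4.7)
The plan is to carry out a Hamilton--Li--Yau style maximum principle on the \emph{pressure} variable. First, I would set $v := \frac{m}{m-1}u^{m-1}$; a direct computation shows that \eqref{porus_medium_equation} is equivalent to the pressure equation
$$\partial_t v = (m-1)\,v\,\Delta_\psi v + |\nabla v|^2.$$
The quantity to be bounded is $F := \frac{|\nabla v|^2}{v} - \alpha\frac{\partial_t v}{v}$, and the conclusion is an $L^\infty$ estimate for $F$ on $B(p,R)\times(0,T]$.

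Second, I would derive a parabolic differential inequality for $F$ of the schematic form
$$(m-1)v\,\Delta_\psi F + 2(m-1)\langle \nabla v,\nabla F\rangle - \partial_t F \;\geq\; \Phi(F,|\nabla v|^2/v,\partial_t v/v),$$
using the weighted Bochner formula
$$\tfrac{1}{2}\Delta_\psi |\nabla v|^2 = |\nabla^2 v|^2 + \langle \nabla v,\nabla \Delta_\psi v\rangle + \Ric_\psi^\infty(\nabla v,\nabla v),$$
together with the standard $N$-improvement $|\nabla^2 v|^2 + \Ric_\psi^\infty(\nabla v,\nabla v)\ge (\Delta_\psi v)^2/N + \Ric_\psi^N(\nabla v,\nabla v)$, valid for $N\ge n$. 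Inserting the curvature bound $\Ric_\psi^N\ge -K$ and using the pressure equation to re-express $\Delta_\psi v$ in terms of $|\nabla v|^2/v$ and $\partial_t v/v$ produces a quadratic lower bound in which completing the square in $\Delta_\psi v$ forces the sharp coefficient $a = \frac{N(m-1)}{N(m-1)+2}$.

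Third, I would localize by a space-time cutoff $\eta(x,t)=\varphi(r(x)/R)\chi(t)$, with $\varphi$ a smooth non-increasing function, $\varphi\equiv 1$ on $[0,1]$, $\varphi\equiv 0$ on $[2,\infty)$, satisfying the customary bounds $|\varphi'|^2/\varphi\le C$ and $|\varphi''|\le C$, and $\chi(t)$ an increasing time cutoff with $\chi(0)=0$. At an interior maximum $(x_0,t_0)$ of $\eta F$ one has $\nabla(\eta F)=0$, $\Delta_\psi(\eta F)\le 0$, $\partial_t(\eta F)\ge 0$. The weighted Laplacian comparison theorem for $\Ric_\psi^N\ge -K$ with $N\ge n$, namely $\Delta_\psi r \le \sqrt{(N-1)K}\,\coth\!\bigl(\sqrt{K/(N-1)}\,r\bigr)$, controls $\Delta_\psi \eta$ and produces the $\sqrt{K}R\coth(\cdot)$ term in \eqref{huang-li-estimate-1}. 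Using that $L=(m-1)\sup_{B(p,2R)\times[0,T]} v$ dominates $(m-1)v$ wherever $v$ needs to be bounded above permits one to close the estimate.

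The main obstacle is the algebraic bookkeeping in the resulting scalar inequality: the cross terms $\langle\nabla\eta,\nabla F\rangle$ generated by the cutoff, together with the terms arising from $\Delta_\psi\eta$ and the comparison correction, must be absorbed into $(\eta F)^2$ through a carefully tuned Young inequality so that the final inequality takes the form $A(\eta F)^2 \le B(\eta F) + C$ with $A$, $B$, $C$ matching the constants on the right-hand side of \eqref{huang-li-estimate-1}. Every absorption must preserve the coefficient $a$ identified in the second step; this is the delicate point, because a naive application of Young's inequality destroys the sharp factor. Solving the quadratic at $(x_0,t_0)$ and restricting to $B(p,R)\times(0,T]$, where $\eta\equiv 1$, then yields the stated pointwise bound.
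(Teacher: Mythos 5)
Your proposal is essentially the same argument the paper uses for its generalized Theorem~\ref{aronson-benilan-epsilon-range} (of which Theorem~\ref{aronson-benilan-huang} is the $\eps=1$, $p_1=p_2=1$ special case): pressure variable, weighted Bochner with the $N$-improvement producing the sharp $a$, spatial cutoff times $t$, Laplacian comparison for $\Delta_\psi r$, and a maximum-principle/Young-inequality closure. One small slip in your schematic differential inequality: the drift/source term should carry the coefficient $2m$ (as in $2m\langle\nabla v,\nabla F_\alpha\rangle$), not $2(m-1)$ — this is where the factor $m$ in the final constant $\frac{a\alpha^2 m L^{1/2}}{(\alpha-1)^{1/2}(m-1)^{1/2}}\frac{C}{R}$ comes from.
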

   We remark that, although in the estimates in \cite{huang-huang-li,huang,hqiu}, we have $\sqrt{K}R$ inside $\coth$, from its proof, the above inequality seems the correct one (compare \cite[(2.12)]{huang} and the Laplacian comparison estimate 2 lines before).

This result gives an improvement of the local Aronson-B\'{e}nilan gradient estimate in \cite{villani}. 
We note that letting $R\rightarrow \infty$ in \eqref{huang-li-estimate-1}, we obtain the following Aronson-B\'{e}nilan estimate for the porous medium equation; we refer to \cite[Corollary 1.1]{huang-huang-li} for the unweighted case and to \cite[Theorem 3.1]{huang}, \cite[Corollary 1]{hqiu} for the weighted case.
\begin{corollary}(Aronson-B\'{e}nilan estimate)\label{aronson-benilan-estimate}
     Let $\left(M, g,\mu\right)$ be an $n$-dimensional complete noncompact weighted Riemannian manifold with $\Ric_\psi^N \geq-K$, where $K \geq 0$ and $N \geq n$. Suppose that $u$ is a positive solution of \eqref{porus_medium_equation} with $m > 1$ on $M\times [0,T]$. Let $v=\frac{m}{m-1} u^{m-1}$ and $L=(m-1)\sup_{M \times[0, T]} v$. Then for any $\alpha>1$, we have
    $$
    \frac{|\nabla v|^2}{v}-\alpha \frac{\partial_t v}{v} \leq a \alpha^2\left(\frac{1}{t}+\frac{K L}{2(\alpha-1)}\right) 
    $$
    on $M\times (0,T]$, where $a = \frac{N(m-1)}{N(m-1) + 2}$. In the special case where $K=0$, we have 
    \begin{equation}\label{classical-ab-estimate}
        \frac{|\nabla v|^2}{v} - \frac{\partial_t v}{v}\leq \frac{a}{t}
    \end{equation}
    on $M\times (0,T]$.
\end{corollary}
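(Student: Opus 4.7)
My plan is to derive the corollary from the local estimate in Theorem~\ref{aronson-benilan-huang} by letting the ball radius tend to infinity. Since the curvature hypothesis $\Ric_\psi^N\geq -K$ holds globally on $M$, Theorem~\ref{aronson-benilan-huang} is applicable on every geodesic ball $B(p,2R)$ for a fixed base point $p\in M$. Writing $L_R:=(m-1)\sup_{B(p,2R)\times[0,T]}v$, one has $L_R\leq L$, and the right-hand side of \eqref{huang-li-estimate-1} is monotone non-decreasing in the quantity playing the role of $L$ (all coefficients in which $L$ appears are non-negative when $K\geq 0$), so the local bound remains valid after replacing $L_R$ by the global $L$. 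For any $(x,t)\in M\times(0,T]$, I would pick $R$ so large that $x\in B(p,R)$, apply the local inequality at $(x,t)$, and send $R\to\infty$.

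The main computational task is to verify that all $R$-dependent terms on the right-hand side of \eqref{huang-li-estimate-1} vanish in this limit. The term $\frac{a\alpha^2 mL^{1/2}}{(\alpha-1)^{1/2}(m-1)^{1/2}}\frac{C}{R}$ manifestly tends to zero, so only the contribution $\frac{CL}{R^2}\left\{1+\sqrt{K}R\coth(\sqrt{K/(N-1)}R)\right\}$ inside the square bracket requires comment. For $K>0$ one has $\coth(\sqrt{K/(N-1)}R)\to 1$ and hence this whole contribution is $O(1/R)$. For $K=0$ the apparent indeterminacy is resolved via the small-argument expansion $\coth(x)\sim 1/x$, giving $\sqrt{K}R\coth(\sqrt{K/(N-1)}R)\to \sqrt{N-1}$, so the contribution is $O(1/R^2)$. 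Either way this term vanishes in the limit. This is the only mildly delicate step, and it is essentially a routine asymptotic check; I do not expect any serious obstacle beyond it.

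Passing to the limit yields, for every $\alpha>1$,
\[
\frac{|\nabla v|^2}{v}-\alpha\frac{\partial_t v}{v}\leq a\alpha^2\left(\frac{1}{t}+\frac{KL}{2(\alpha-1)}\right)
\]
on $M\times(0,T]$, which is precisely the main assertion. In the special case $K=0$ the right-hand side collapses to $a\alpha^2/t$, and letting $\alpha\searrow 1$ (using that the left-hand side is affine, hence continuous, in $\alpha$) produces the sharp bound $\frac{|\nabla v|^2}{v}-\frac{\partial_t v}{v}\leq a/t$. This completes the deduction of the corollary from the local gradient estimate.
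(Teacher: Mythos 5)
Your proposal is correct and matches the paper's approach exactly: the paper itself states that Corollary~\ref{aronson-benilan-estimate} is obtained by letting $R\to\infty$ in \eqref{huang-li-estimate-1}, and your verification that the $R$-dependent terms vanish (including the limiting interpretation of $\sqrt{K}R\coth(\sqrt{K/(N-1)}R)$ when $K=0$) together with the final $\alpha\searrow 1$ step simply supplies the routine details the paper leaves implicit.
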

We remark that \eqref{classical-ab-estimate} recovers the classical Aronson-B\'{e}nilan estimate \eqref{aronson-classical} (we also refer to \cite[Corollary 3.4]{villani}, where they generalized the Aronson-B\'{e}nilan estimate on compact Riemannian manifolds in \cite[Proposition 11.12]{vazque1} to noncompact complete manifolds). 
We refer to \cite{hqiu} and references therein for more on the background.

\section{Aronson-B\'{e}nilan estimate under lower weighted Ricci curvature bounds with $\eps$-range}
In this section, we prove a local Aronson-B\'{e}nilan gradient estimate under lower bounds of the weighted Ricci curvature with $\eps$-range.
%%%%%%%%%%%%
Let $u$ be a positive solution of \eqref{porus_medium_equation} with $m > 1$. 
We set 
\begin{equation*}
    v = \frac{m}{m-1}u^{m-1}.
\end{equation*}
From direct calculations, we have
\begin{equation}
    \partial_t v = |\nabla v|^2 + (m-1)v\Delta_\psi v\label{moto-2-1}.
\end{equation}
We define
\begin{equation*}
    F_\alpha=\frac{|\nabla v|^{2}}{v}-\alpha \frac{\partial_t v}{v}
\end{equation*}
for a constant $\alpha \in \mathbb{R}$ and 
\begin{equation*}
    \mathscr{L}:=\frac{\partial}{\partial t}-(m-1) v \Delta_{\psi} .
\end{equation*}
We set a positive constant depending on $m$ and $N$:
 \begin{equation}\label{a-def}
    a(m,N) = \begin{cases}
        1 & \mbox{if } N = \infty,\\
        \frac{N(m-1)}{N(m-1)+2} & \mbox{if } N \in (-\infty,\frac{-2}{m-1}) \cup [n,+\infty).
    \end{cases}
\end{equation}
 We prove the following theorem.
\begin{theorem}\label{aronson-benilan-epsilon-range}
    Let $(M,g,\mu)$ be an $n$-dimensional complete weighted Riemannian manifold, $u$ be a positive solution of \eqref{porus_medium_equation} with $m > 1$ on $M\times [0,T]$ and 
    $N \in (-\infty,\frac{-2}{m-1}) \cup [n,+\infty]$. 
    We assume $\Ric_\psi ^N \geq -K \e^{\frac{4(\ez-1)}{n-1}\psi}$ on $B(p, 2R)$ with $\ez \in \R$ in the $\ez$-range \eqref{epsilin-range}, $K \geq 0$
    and
    \[ p_1 \le \e^{-\frac{2(\ez-1)}{n-1}\psi} \le p_2 \] 
    with $p_2 \ge p_1>0$ in $B(p,2R)$ for some fixed point $p$ in $M$. %, and some fixed radius $R \geq 2$.
    Let $v = \frac{m}{m-1}u^{m-1}$ and $L = (m-1)\sup_{B(p,2R)\times[0,T]}v$. Then for any $\alpha > 1$, we have 
    \begin{eqnarray*}
            \frac{|\nabla v|^{2}}{v}-\alpha \frac{\partial_t v}{v}  &\leq& \Bigg\{\frac{a\alpha^2 m L^{1/2}}{(\alpha-1)^{1/2}(m-1)^{1/2}} \frac{C}{R}\\
            &&+ a^{1/2}\alpha \left[\frac{1}{t}+\frac{K}{p_1^2}\frac{L}{2(\alpha-1)} +\frac{C L}{R^{2}}\left\{1+\sqrt{K} R \operatorname{coth}\left(\frac{\sqrt{cK}}{p_2} R\right)\right\}\right]^{1 / 2}\Bigg\}^{2}
        \end{eqnarray*}
        on $B(p,R)\times (0,T]$, where $a = a(m,N)$ defined in \eqref{a-def} and $C$ is a constant depending on $N,\eps, p_1$.
\end{theorem}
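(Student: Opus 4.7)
The plan is to push through the Li-Yau--Bochner argument of \cite{huang-huang-li,huang} in the $\eps$-range setting. Throughout I work with
\[ F_\alpha=\frac{|\nabla v|^{2}}{v}-\alpha\frac{\partial_t v}{v}, \qquad \mathscr{L}=\partial_t-(m-1)v\Delta_\psi, \]
with the aim of proving a pointwise inequality $\mathscr{L}F_\alpha \le (\text{negative quadratic in }F_\alpha) + (\text{lower-order terms})$, and then applying a parabolic maximum principle to $\eta F_\alpha$ for a suitable spatial cutoff $\eta$ supported in $B(p,2R)$.

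The starting point is the weighted Bochner identity
\[
\tfrac{1}{2}\Delta_\psi|\nabla v|^{2}=|\nabla^{2} v|^{2}+\langle\nabla v,\nabla\Delta_\psi v\rangle+\Ric_\psi^{N}(\nabla v,\nabla v)+\frac{\langle\nabla\psi,\nabla v\rangle^{2}}{N-n}.
\]
The curvature hypothesis $\Ric_\psi^N \ge -K\e^{4(\eps-1)\psi/(n-1)}$ combined with the weight bound $\e^{-2(\eps-1)\psi/(n-1)}\ge p_1$ turns the exponential into a multiplicative factor $p_1^{-2}$, which is the origin of the term $KL/\{2(\alpha-1)p_1^{2}\}$ in the conclusion. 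The Hessian-plus-weight quantity $|\nabla^{2} v|^{2}+\langle\nabla\psi,\nabla v\rangle^{2}/(N-n)$ is then estimated from below by a multiple of $(\Delta_\psi v)^{2}$ via the standard Young-inequality trick tuned through the parameter $\eps$, as in \cite{lu2,kuwae_li}. Using \eqref{moto-2-1} to replace $(m-1)v\Delta_\psi v$ by $\partial_t v-|\nabla v|^{2}$ and collecting terms yields a pointwise bound of the form $\mathscr{L}F_\alpha \le -2 v F_\alpha^{2}/(a\alpha^{2}) + \cdots$ with $a=a(m,N)$ as in \eqref{a-def}; the sign check $N(m-1)+2<0$ for $N<-2/(m-1)$ ensures $a>0$ in that regime, while $N=\infty$ follows from interpreting $\langle\nabla\psi,\nabla v\rangle^{2}/(N-n)$ as zero.

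Next I would introduce a Li-Yau cutoff $\eta\in C^\infty_c(B(p,2R))$ depending only on $r(x)=d(p,x)$, with $\eta\equiv 1$ on $B(p,R)$, $|\nabla\eta|^{2}/\eta\le C/R^{2}$, and $\Delta_\psi\eta\ge -C/R^{2}-(C/R)\Delta_\psi r$. Theorem \ref{laplacian_comparison_theorem} controls $\Delta_\psi r$ on $B(p,2R)$ by a multiple of $(cp_1)^{-1}\sqrt{cK}\coth(\sqrt{cK}\,r/p_2)$, which produces the $\coth(\sqrt{cK}R/p_2)$ factor in the conclusion. Evaluating $\eta F_\alpha$ at a space-time maximum $(x_0,t_0)$ (which may be assumed to satisfy $\eta F_\alpha(x_0,t_0)>0$, the only nontrivial case), the stationarity conditions $\nabla(\eta F_\alpha)=0$ and $\mathscr{L}(\eta F_\alpha)\ge 0$ combine with the differential inequality above to produce a quadratic inequality of the form $A(\eta F_\alpha)^{2}\le B(\eta F_\alpha)+D$ at $(x_0,t_0)$, with $A,B,D$ explicit in $\alpha, K, L, R, p_1, p_2, a$. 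Solving this quadratic and restricting to $B(p,R)\times(0,T]$, where $\eta\equiv 1$, gives the claimed bound.

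The main obstacle is the constant bookkeeping in the second step. The refined Bochner inequality has to be paired with an auxiliary decomposition of the form $(y-w)^{2}\ge (1-\alpha^{-1})y^{2}-(\alpha-1)w^{2}$ applied to $\Delta_\psi v$ versus $|\nabla v|^{2}/v$, with the parameter matching the exponent in $F_\alpha$, so that the square term in $F_\alpha$ emerges with the exact coefficient $a(m,N)/\alpha^{2}$ and the linear correction produces $LK/\{2(\alpha-1)p_1^{2}\}$. A useful sanity check running in parallel is that when $\eps=0$, $p_1=p_2=1$, and $N\in[n,\infty)$, the bound should collapse to Theorem \ref{aronson-benilan-huang}.
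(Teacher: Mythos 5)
Your overall strategy — weighted Bochner to obtain a quadratic differential inequality in $F_\alpha$, a spatial cutoff supported in $B(p,2R)$, the $\eps$-range Laplacian comparison theorem, and a maximum-principle argument for $G = t\phi F_\alpha$ — matches the paper's, and it is the right skeleton. However, you misplace where the $\eps$-range actually enters. The estimate $\|\nabla^2 v\|_{HS}^2 + \langle\nabla\psi,\nabla v\rangle^2/(N-n) \geq (\Delta_\psi v)^2/N$ is the pointwise content of the $N$-weighted Bochner inequality \eqref{weighted_bochner_N}; it holds for all $N\in(-\infty,1]\cup[n,\infty]$ by a Cauchy--Schwarz/Young argument that does \emph{not} involve $\eps$ (this is \cite[Theorem 4.1]{ohta_negative}, and it is exactly what the paper invokes in Lemma \ref{basic_lemma_porus_medium} to produce the constant $a(m,N)$ of \eqref{a-def}). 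There is no ``$\eps$-tuned'' refinement of Bochner. The $\eps$-range enters at exactly two downstream points: (i) the variable curvature hypothesis is reduced to the constant bound $\Ric_\psi^N \geq -K/p_1^2$ by the weight pinching $\e^{-2(\eps-1)\psi/(n-1)} \geq p_1$, and (ii) Theorem \ref{laplacian_comparison_theorem} controls $\Delta_\psi r$, hence $\Delta_\psi\phi$, in \eqref{comparison-cut-off}. Baking $\eps$ into the Bochner step would not reproduce $a(m,N)$ and is not needed.

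Two smaller slips. Your sanity check should take $\eps = 1$, $p_1 = p_2 = 1$ (not $\eps = 0$): then the curvature hypothesis becomes the constant bound $\Ric_\psi^N \geq -K$, the constant $c$ in \eqref{c_num} becomes $1/(N-1)$, and the statement collapses to Theorem \ref{aronson-benilan-huang}; moreover $\eps = 1$ lies in the $\eps$-range \eqref{epsilin-range} only when $N \geq n$, which is precisely why variable bounds are needed when $N < 0$. Finally, the paper does not discard the cross term via a Young-type split $(y-w)^2 \geq (1-\alpha^{-1})y^2 - (\alpha-1)w^2$; it keeps the exact quadratic $-\frac{1}{a\alpha^2}\bigl(F_\alpha + (\alpha-1)|\nabla v|^2/v\bigr)^2$ and, at the space-time maximum of $G$, parametrizes $|\nabla v|^2/v = \beta F_\alpha$ with $\beta\geq 0$, then uses $\frac{[(\alpha-1)\beta]^{1/2}}{[1+(\alpha-1)\beta]^2} \leq \frac12$ and $\frac{(\alpha-1)\beta}{[1+(\alpha-1)\beta]^2} \leq \frac14$ to separate the terms. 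Your intermediate bound $\mathscr{L}F_\alpha \leq -2vF_\alpha^2/(a\alpha^2) + \cdots$ has an extraneous factor $v$ and the wrong coefficient. These are fixable, but as written they would lead you off the precise constants in the claimed estimate.
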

%We note that when $N\in [n,\infty)$, $\eps = 1$ and $a = b = 1$, then $c = \frac{1}{N-1}$ and Theorem \ref{aronson-benilan-epsilon-range} recovers Theorem \ref{aronson-benilan-huang}.
We first prove the following lemma:
\begin{lemma}\label{basic_lemma_porus_medium}
    Let $(M,g,\mu)$ be an $n$-dimensional complete weighted Riemannian manifold, $u$ be a positive solution of \eqref{porus_medium_equation} with $m > 1$ on $M\times[0,T]$ and $N \in (-\infty,\frac{-2}{m-1}) \cup [n,+\infty]$. Then we have
    \begin{eqnarray}
        \mathscr{L}(F_\alpha) &\leq &-\frac{1}{a}\left[(m-1) \Delta_{\psi} v\right]^{2}+(1-\alpha)\left(\frac{\partial_t v}{v}\right)^{2}+2 m\langle\nabla v, \nabla F_\alpha\rangle  -2(m-1) \operatorname{Ric}_{\psi}^{N}( \nabla v)\label{basic_ineq_porus_medium},
    \end{eqnarray}
    where $v = \frac{m}{m-1}u^{m-1}$ and $a = a(m,N)$ defined in \eqref{a-def}.
\end{lemma}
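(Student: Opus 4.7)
My plan is to split $F_\alpha = \Phi - \alpha K$ with $\Phi := |\nabla v|^2/v$ and $K := \partial_t v/v$, and to exploit the identity $K = \Phi + (m-1)\Delta_\psi v$ that follows from \eqref{moto-2-1}. I will compute $\mathscr{L}(\Phi)$ and $\mathscr{L}(K)$ separately and then combine. For $\mathscr{L}(\Phi)$, I rewrite $\partial_t|\nabla v|^2 = 2\langle\nabla v,\nabla\partial_t v\rangle$ and substitute \eqref{moto-2-1} to expose the terms $(\Delta_\psi v)\Phi$ and $\langle\nabla v,\nabla\Delta_\psi v\rangle$; for the second-order piece $-(m-1)v\Delta_\psi(|\nabla v|^2/v)$, I apply the Leibniz rule for $\Delta_\psi$ on a quotient and then invoke the weighted Bochner formula
\[
\tfrac{1}{2}\Delta_\psi|\nabla v|^2 = |\nabla^2 v|^2 + \langle\nabla v,\nabla\Delta_\psi v\rangle + \Ric_\psi^\infty(\nabla v,\nabla v).
\]
Using $\langle\nabla v,\nabla\Phi\rangle = \langle\nabla v,\nabla|\nabla v|^2\rangle/v - \Phi^2$ to identify the transport coefficient $2m$, I expect to obtain
\[
\mathscr{L}(\Phi) = 2m\langle\nabla v,\nabla\Phi\rangle + \Phi^2 + 2(m-1)\Phi\,\Delta_\psi v - 2(m-1)|\nabla^2 v|^2 - 2(m-1)\Ric_\psi^\infty(\nabla v,\nabla v).
\]

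For $\mathscr{L}(K)$ I use that $\partial_t$ and $\Delta_\psi$ commute, so $\partial_t\Delta_\psi v = \Delta_\psi|\nabla v|^2 + (m-1)\Delta_\psi(v\Delta_\psi v)$; expanding via the product formula $\Delta_\psi(fg) = f\Delta_\psi g + g\Delta_\psi f + 2\langle\nabla f,\nabla g\rangle$ and again applying the Bochner identity, I anticipate that the $|\nabla^2 v|^2$ and $\Ric_\psi^\infty$ contributions from $\mathscr{L}(\Phi)$ and from $(m-1)\mathscr{L}(\Delta_\psi v)$ cancel exactly inside $\mathscr{L}(K) = \mathscr{L}(\Phi) + (m-1)\mathscr{L}(\Delta_\psi v)$, leaving the clean identity $\mathscr{L}(K) = 2m\langle\nabla v,\nabla K\rangle + K^2$. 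Combining with the previous display and the algebraic identity $\Phi^2 + 2(m-1)\Phi\,\Delta_\psi v = K^2 - (m-1)^2(\Delta_\psi v)^2$ then gives
\[
\mathscr{L}(F_\alpha) = 2m\langle\nabla v,\nabla F_\alpha\rangle + (1-\alpha)K^2 - (m-1)^2(\Delta_\psi v)^2 - 2(m-1)|\nabla^2 v|^2 - 2(m-1)\Ric_\psi^\infty(\nabla v,\nabla v).
\]

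The final step is to apply the $N$-Bakry--\'Emery Bochner inequality
\[
|\nabla^2 v|^2 + \frac{\langle\nabla\psi,\nabla v\rangle^2}{N-n} \ge \frac{(\Delta_\psi v)^2}{N}
\]
together with the definition $\Ric_\psi^\infty(\nabla v,\nabla v) = \Ric_\psi^N(\nabla v,\nabla v) + \langle\nabla\psi,\nabla v\rangle^2/(N-n)$: the two copies of $\langle\nabla\psi,\nabla v\rangle^2/(N-n)$ cancel, and the coefficient of $(\Delta_\psi v)^2$ becomes $-(m-1)^2 - 2(m-1)/N = -(m-1)^2/a$ by the definition \eqref{a-def} of $a$. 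The hypothesis $N \in (-\infty,-2/(m-1)) \cup [n,\infty]$ is precisely what guarantees $a > 0$, so this coefficient is genuinely negative. The \emph{main obstacle} is to verify the Bochner inequality itself in the $N < 0$ regime, where both $1/N$ and $1/(N-n)$ are negative and the classical trace argument has to be re-examined; I will confirm it by rewriting the difference between its two sides, using $|\nabla^2 v|^2 \ge (\Delta v)^2/n$, as
\[
\frac{[(N-n)\Delta v + n\langle\nabla\psi,\nabla v\rangle]^2}{nN(N-n)},
\]
and observing that $nN(N-n) > 0$ both for $N \ge n$ and for $N < 0$, which exactly covers the admissible range.
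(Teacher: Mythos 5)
Your proposal is correct and follows essentially the same approach as the paper: establish the exact identity for $\mathscr{L}(F_\alpha)$ in terms of $\|\nabla^2 v\|_{HS}^2$ and $\Ric_\psi^\infty(\nabla v)$, then apply the $N$-weighted Bochner inequality to trade these for $(\Delta_\psi v)^2/N$ and $\Ric_\psi^N(\nabla v)$, with $a(m,N)$ absorbing the two $(\Delta_\psi v)^2$ coefficients. The paper simply cites the $\mathscr{L}(F_\alpha)$ identity from Li--Li (Proposition 7.5 of their preprint) and the $N<0$ Bochner inequality from Ohta, whereas you rederive both in full, including the clean check $\frac{(\Delta v)^2}{n} + \frac{\langle\nabla\psi,\nabla v\rangle^2}{N-n} - \frac{(\Delta_\psi v)^2}{N} = \frac{\bigl[(N-n)\Delta v + n\langle\nabla\psi,\nabla v\rangle\bigr]^2}{nN(N-n)} \ge 0$, valid precisely because $nN(N-n)>0$ on the admissible range.
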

\begin{proof}
    This is shown from the $N$-weighted Bochner inequality:
    \begin{equation}\label{weighted_bochner_N}
        \Delta_{\psi}\left(\frac{|\nabla u|^2}{2}\right)-\left\langle\nabla \Delta_{\psi} u, \nabla u\right\rangle  \geq \operatorname{Ric}_\psi^N(\nabla u)+\frac{\left(\Delta_{\psi} u\right)^2}{N}
    \end{equation}
    for $u \in {C}^{\infty}(M)$, which is valid also for $N < 0$ as is proved in \cite[Theorem 4.1]{ohta_negative}. 
    When $ N= \infty$, we have 
    \begin{equation}\label{weighted_bochner_infty}
        \Delta_{\psi}\left(\frac{|\nabla u|^2}{2}\right)-\left\langle\nabla \Delta_{\psi} u, \nabla u\right\rangle = \operatorname{Ric}_\psi^\infty(\nabla u)+\|\nabla^2 u\|_{HS}^2\geq \operatorname{Ric}_\psi^\infty  (\nabla u).
    \end{equation}
    %For the case of $N \in (-\infty,\frac{-2}{m-1})$, we can also use (\ref{weighted_bochner_N}) since the Bochner inequality still holds in $N< 0$ as is proved in \cite[Theorem 4.1]{ohta_negative}.
    We first prove \eqref{basic_ineq_porus_medium} when $N \in (-\infty,\frac{-2}{m-1})\cup [n,\infty)$. 
 As is shown in \cite[Proposition 7.5]{lili-2}, we have 
 \begin{equation}\label{lili-2-bacic-equality}
    \mathscr{L}(F_\alpha) = -2(m-1)\left(\|\nabla^2 v\|_{HS}^2 + \Ric_\psi^{\infty}(\nabla v)\right) + 2m\langle\nabla F_\alpha,\nabla v \rangle -(\alpha-1)\left(\frac{\partial_t v}{v}\right)^2 - F_1^2 .
 \end{equation}
 Combining \eqref{weighted_bochner_N} and \eqref{weighted_bochner_infty} with \eqref{lili-2-bacic-equality}, we have 
 \begin{eqnarray*}
    \mathscr{L}(F_\alpha) &\leq& -2(m-1)\left(\frac{(\Delta_\psi v)^2}{N} + \Ric^N_\psi(\nabla v)\right) + 2m\langle \nabla F_\alpha, \nabla v \rangle - (\alpha-1)\left(\frac{\partial_t v}{v}\right)^2 - F_1^2\\
    &=& \left\{ \frac{-2(m-1)}{N} - (m-1)^2\right\}(\Delta_\psi v)^2 -2(m-1)\Ric_{\psi}^N(\nabla v) + 2m\langle \nabla F_\alpha,\nabla v \rangle - (\alpha-1)\left(\frac{\partial_t v}{v}\right)^2,
 \end{eqnarray*}
 where we used $F_1 = -(m-1)\Delta_\psi v$.
Therefore, we obtain the desired inequality. The case of $N = \infty$ follows by the same method.
\end{proof}

\underline{\textit{Proof of Theorem \ref{aronson-benilan-epsilon-range}}}

We apply the argument in \cite[Theorem 1]{hqiu}, \cite[Theorem 1.1]{huang}.
First, we construct a suitable cut-off function $\phi$ (we also refer to \cite[Theorem 2.1]{wu} for the construction of $\phi$).
%We start with an auxiliary calculation about a suitable non-negative cut-off function $\phi$, which is defined as follows \cite{wu}. 
Let $\eta$ be a non-negative cut-off function on $[0, \infty)$ such that
$$
\eta(r)= \begin{cases}1 & \text { on }[0,1], \\ 0 & \text { on }[2, \infty)\end{cases}
$$
and $0 \leq \eta \leq 1$ on the interval $(1,2)$ with $-C_0 \eta^{1 / 2}(r) \leq \eta^{\prime}(r) \leq 0$ and $\eta^{\prime \prime}(r) \geq-C_0$, where $C_0 > 0$ is a universal constant. Let $R>0$ and $r(x):=d(p, x)$. We define
$$
\phi(x):=\eta\left(\frac{r(x)}{R}\right) .
$$
By construction, we have
$$
\frac{|\nabla \phi|^2}{\phi} \leq \frac{C_1}{R^2},
$$
where $C_1$ is a constant depending only on $C_0$.
We first consider the case of $K > 0$. By Theorem \ref{laplacian_comparison_theorem}, we have
\begin{eqnarray*}
    \Delta_\psi r\leq \frac{\sqrt{K}}{p_1\sqrt{c}}\coth\left(\frac{\sqrt{cK}}{p_2}r\right).
\end{eqnarray*}
%where $\rho := p_1 $ if $\bs'_{cK}(r(x)/p_2) \geq 0$ and $\rho := p_2$ if $\bs'_{cK}(r(x)/p_2) < 0$.
Therefore, we obtain
\begin{eqnarray}
    \Delta_\psi \phi&=&\frac{\eta^{\prime}(r/R) \Delta_\psi r}{R}+\frac{\eta^{\prime \prime}(r/R)|\nabla r|^2}{R^2}\nonumber\\
    &\geq& \frac{\eta^{\prime}(r/R)}{R}\frac{\sqrt{K}}{p_1\sqrt{c}}\coth\left(\frac{\sqrt{cK}}{p_2}r\right) - \frac{C_0}{R^2}\nonumber\\
    &\geq& \frac{\eta^{\prime}(r/R)}{R}\frac{\sqrt{K}}{p_1\sqrt{c}}\coth\left(\frac{\sqrt{cK}}{p_2}R\right)- \frac{C_0}{R^2}\nonumber\\
    &\geq& -\frac{C_0\eta^{1/2}(r/R)}{R}\frac{\sqrt{K}}{p_1\sqrt{c}}\coth\left(\frac{\sqrt{cK}}{p_2}R\right)- \frac{C_0}{R^2}\nonumber\\
    &\geq& -\frac{C_2}{R^2}\left\{1 + \sqrt{K}R \coth\left(\frac{\sqrt{cK}}{p_2}R\right)\right\},\label{comparison-cut-off}
\end{eqnarray}
where we used $\eta^{\prime}(r(x) / R) = 0$ if $r(x) < R$ in the second inequality and $C_2$ is a constant depending on $N, \eps, p_1$.  
In the following argument, we fix $\alpha \geq 1$ and set $\widetilde{A} = K/p_1^2$ and $G = t\phi F_\alpha$.

Substituting $\operatorname{Ric}_\psi^N \geq-\widetilde{A}$ to \eqref{basic_ineq_porus_medium}, we obtain
$$
\mathscr{L}(F_\alpha) \leq-\frac{1}{a}\left[(m-1) \Delta_\psi v\right]^2+2 m\langle\nabla v, \nabla F_\alpha\rangle+2 \widetilde{A} L \frac{|\nabla v|^2}{v}
$$
on $B(p,2R)\times [0,T]$.
Since
$$
(m-1) \Delta_\psi v=\frac{\partial_t v}{v}-\frac{|\nabla v|^2}{v}=\frac{1}{\alpha}\left(\frac{|\nabla v|^2}{v}-F_\alpha\right)-\frac{|\nabla v|^2}{v}=-\frac{1}{\alpha} F_\alpha+ \frac{1-\alpha}{\alpha} \frac{|\nabla v|^2}{v} ,
$$
we have
$$
\begin{aligned}
\mathscr{L}(F_\alpha) & \leq -\frac{1}{a \alpha^2}\left(F_\alpha+(\alpha-1) \frac{|\nabla v|^2}{v}\right)^2+2 m\langle\nabla v, \nabla F_\alpha\rangle+2 \widetilde{A} L \frac{|\nabla v|^2}{v} .
\end{aligned}
$$
We apply the maximum principle to $G$ on $B(p,2 R) \times[0, T]$. 
Let $(x_1,t_1)\in B(p,2 R) \times[0, T]$ be a point that attains the maximum of $G$ and we assume $G\left(x_1, t_1\right)>0$ (otherwise the proof is trivial), which implies $t_1>0$. 
Since 
\begin{eqnarray*}
    \nabla G(x_1,t_1) = 0,\quad \Delta G(x_1,t_1) \leq 0,\quad \partial_t G(x_1,t_1) \geq 0,
\end{eqnarray*}
we find
$$
\mathscr{L}(G)(x_1,t_1)  \geq 0.
$$
Therefore, we have at $(x_1, t_1)$,
$$
\begin{aligned}
0 &\leq \mathscr{L}(G) =\left(\frac{\partial}{\partial t}-(m-1) v \Delta_\psi\right)(t \phi F_\alpha)\\
&=\phi F_\alpha+t_1 \phi \frac{\partial}{\partial t} F_\alpha-(m-1) v \Delta_\psi\left(t_1 \phi F_\alpha\right) \\
&=\phi F_\alpha+t_1 \phi \frac{\partial}{\partial t} F_\alpha-(m-1) v t_1\left(F_\alpha \Delta_\psi \phi+2\langle\nabla \phi, \nabla F_\alpha\rangle+\phi \Delta_\psi F_\alpha\right) \\
&=\phi F_\alpha+t_1 \phi \mathscr{L} (F_\alpha)-(m-1) v t_1 F_\alpha \Delta_\psi \phi-2(m-1) v t_1\langle\nabla \phi, \nabla F_\alpha\rangle \\
&=\frac{G}{t_1}+t_1 \phi \mathscr{L} (F_\alpha)-(m-1) v \frac{\Delta_\psi \phi}{\phi} G+2(m-1) v \frac{|\nabla \phi|^2}{\phi^2} G\\
&\leq  \frac{G}{t_1}+t_1 \phi\left\{-\frac{1}{a \alpha^2}\left(F_\alpha+(\alpha-1) \frac{|\nabla v|^2}{v}\right)^2+2 m\langle\nabla v, \nabla F_\alpha\rangle+2 \widetilde{A} L \frac{|\nabla v|^2}{v}\right\} \\
&\quad -(m-1) v \frac{\Delta_\psi \phi}{\phi} G+2(m-1) v \frac{|\nabla \phi|^2}{\phi^2} G \\
&= \frac{G}{t_1}+t_1 \phi\left\{-\frac{1}{a \alpha^2}\left(F_\alpha+(\alpha-1) \frac{|\nabla v|^2}{v}\right)^2-\frac{2 m F_\alpha}{\phi}\langle\nabla v, \nabla \phi\rangle+2 \widetilde{A} L \frac{|\nabla v|^2}{v}\right\} \\
&\quad -(m-1) v \frac{\Delta_\psi \phi}{\phi} G+2(m-1) v \frac{|\nabla \phi|^2}{\phi^2} G ,
\end{aligned}
$$
where we used $\nabla G(x_1,t_1) = 0$ in the fifth and last equalities. 
Since $G(x_1,t_1) > 0$, we have $F_\alpha(x_1,t_1) > 0$. 
Let $\frac{|\nabla v|^2}{v}(x_1,t_1) =\beta F_\alpha(x_1,t_1)$ for some  $\beta \geq 0$.
%Assume $F_\alpha=\frac{|\nabla v|^2}{v}-\alpha \frac{\partial_t v}{v} \geq 0$, otherwise, the conclusion holds trivially. Let $\frac{|\nabla v|^2}{v}=\beta F_\alpha$, $\beta \geq 0$. 
It follows that
$$
\begin{aligned}
0 \leq & \frac{G}{t_1}+t_1 \phi\left\{-\frac{1}{a \alpha^2}(F_\alpha+(\alpha-1) \beta F_\alpha)^2-\frac{2 m F_\alpha}{\phi}\langle\nabla v, \nabla \phi\rangle+2 \widetilde{A} L \frac{|\nabla v|^2}{v}\right\} \\
&-(m-1) v \frac{\Delta_\psi \phi}{\phi} G+2(m-1) v \frac{|\nabla \phi|^2}{\phi^2} G \\
=& \frac{G}{t_1}-\frac{1}{a \alpha^2} t_1 \phi F_\alpha^2[1+(\alpha-1) \beta]^2-2 m t_1 \phi F_\alpha \frac{\langle\nabla v, \nabla \phi\rangle}{\phi}+2 \widetilde{A} L t_1 \phi \frac{|\nabla v|^2}{v} \\
&-(m-1) v \frac{\Delta_\psi \phi}{\phi} G+2(m-1) v \frac{|\nabla \phi|^2}{\phi^2} G \\
=& \frac{G}{t_1}-\frac{G^2}{a \alpha^2 t_1 \phi}[1+(\alpha-1) \beta]^2-2 m G \frac{\langle\nabla v, \nabla \phi\rangle}{\phi}+2 \widetilde{A} L \beta G \\
&-(m-1) v \frac{\Delta_\psi \phi}{\phi} G+2(m-1) v \frac{|\nabla \phi|^2}{\phi^2} G.
\end{aligned}
$$
By the Cauchy-Schwarz inequality, we have
$$
\begin{aligned}
\left|2 m G \frac{\langle\nabla v, \nabla \phi\rangle}{\phi}\right| & \leq 2 m G \frac{|\nabla v| \cdot|\nabla \phi|}{\phi}\\
&=2 m G \frac{|\nabla v|}{v^{1 / 2}} \cdot \frac{|\nabla \phi|}{\phi} \cdot v^{1 / 2}\\
%&=2 m G \beta^{1 / 2} F_\alpha^{1 / 2} \frac{|\nabla \phi|}{\phi} v^{1 / 2} \\
&=2 m G \beta^{1 / 2} F_\alpha^{1 / 2} \frac{|\nabla \phi| \phi^{1 / 2} t_1^{1 / 2}}{(m-1)^{1 / 2} \phi^{3 / 2}t_1^{1 / 2}} (m-1)^{1 / 2} v^{1 / 2} \\
& \leq 2 m G \beta^{1 / 2} \frac{|\nabla \phi| G^{1 / 2}}{(m-1)^{1 / 2} \phi^{3 / 2} t_1^{1 / 2}} L^{1 / 2}.
\end{aligned}
$$
%%%%%%%%%%%%%%%%
Therefore
$$
\begin{aligned}
0 \leq & \frac{G}{t_1}-\frac{G^2}{a \alpha^2 t_1 \phi}[1+(\alpha-1) \beta]^2+2 m G \beta^{1 / 2} L^{1 / 2} \frac{|\nabla \phi| G^{1 / 2}}{(m-1)^{1 / 2} \phi^{3 / 2} t_1^{1 / 2}}+2 \widetilde{A} L \beta G \\
&-(m-1) v \frac{\Delta_\psi \phi}{\phi} G+2(m-1) v \frac{|\nabla \phi|^2}{\phi^2} G.
\end{aligned}
$$
Multiplying both sides of the above inequality by $\frac{\phi}{G}$, we obtain
\begin{eqnarray}
    0 &\leq & \frac{\phi}{t_1}-\frac{G}{a \alpha^2 t_1}[1+(\alpha-1) \beta]^2+2 m \beta^{1 / 2} L^{1 / 2} \frac{|\nabla \phi| G^{1 / 2}}{(m-1)^{1 / 2} \phi^{1 / 2} t_1^{1 / 2}}+2 \widetilde{A} L \beta \phi -(m-1) v \Delta_\psi \phi+2(m-1) v \frac{|\nabla \phi|^2}{\phi}\nonumber.
\end{eqnarray}
Hence, we get
\begin{eqnarray}
    \frac{[1+(\alpha-1) \beta]^2}{a \alpha^2 t_1} G-2 m \frac{\beta^{1 / 2} L^{1 / 2}|\nabla \phi|}{(m-1)^{1 / 2} \phi^{1 / 2} t_1^{1 / 2}} G^{1 / 2}  \leq \frac{\phi}{t_1}+2 \widetilde{A} L \beta \phi  -(m-1) v \Delta_\psi \phi+2(m-1) v \frac{|\nabla \phi|^2}{\phi}.\qquad\qquad
\end{eqnarray}
Combining this with \eqref{comparison-cut-off}, we have 
\begin{eqnarray}\label{based_inequality}
    \frac{[1+(\alpha-1) \beta]^2}{a \alpha^2 t_1} G-2 m \frac{\beta^{1 / 2} L^{1 / 2}|\nabla \phi|}{(m-1)^{1 / 2} \phi^{1 / 2} t_1^{1 / 2}} G^{1 / 2}  &\leq& \frac{\phi}{t_1}+2 \widetilde{A} L \beta \phi  +L \frac{C_2}{R^2}\left\{1 + \sqrt{K}R\coth\left( \frac{\sqrt{cK}}{p_2}R\right)\right\} \nonumber\\
    &&\quad +2L \frac{|\nabla \phi|^2}{\phi} .
\end{eqnarray}

From the inequality $Ax^2 - Bx \leq C$ $(A > 0,\  B,C\geq 0)$, we have $x\leq \frac{B}{A} + \sqrt{\frac{C}{A}}$. Applying this with $x = G^{1/2}$, we obtain from \eqref{based_inequality} that
\begin{eqnarray}\label{moto-nashi-1}
    &&G^{1/2} \leq \frac{ 2a \alpha^2 m L^{1 / 2} t_1^{1 / 2}[(\alpha-1) \beta]^{1 / 2}}{(m-1)^{1 / 2}(\alpha-1)^{1 / 2}[1+(\alpha-1) \beta]^2} \frac{|\nabla \phi|}{\phi^{1 / 2}}\nonumber\\
    &&\quad+\left[\frac{a \alpha^2 t_1}{[1+(\alpha-1) \beta]^2}\left(\frac{\phi}{t_1}+2 \widetilde{A} L \beta \phi+  \frac{C_2L}{R^2}\left\{1 + \sqrt{K}R\coth\left( \frac{\sqrt{cK}}{p_2}R\right)\right\}+2L \frac{|\nabla \phi|^2}{\phi}\right)\right]^{1 / 2} .\qquad
\end{eqnarray}
%\begin{aligned}
 %   &\text { Therefore we obtain }\\
  %  &=\frac{2 a \alpha^2 m L^{1 / 2} t_1^{1 / 2}[(\alpha-1) \beta]^{1 / 2}}{(m-1)^{1 / 2}(\alpha-1)^{1 / 2}[1+(\alpha-1) \beta]^2} \frac{|\nabla \phi|}{\phi^{1 / 2}}\\
   % &+\left(\frac{a \alpha^2 t_1}{[1+(\alpha-1) \beta]^2}\left(\frac{1}{t_1}+2 A L \beta \phi-(m-1) v \Delta_V \phi+2(m-1) v \frac{|\nabla \phi|^2}{\phi}\right)\right)^{1 / 2} .
    %\end{aligned}
%$=\frac{2 a \alpha^2 m L^{1 / 2} t_1^{1 / 2}[(\alpha-1) \beta]^{1 / 2}}{(m-1)^{1 / 2}(\alpha-1)^{1 / 2}[1+(\alpha-1) \beta]^2} \frac{|\nabla \phi|}{\phi^{1 / 2}}$
%$+\left(\frac{a \alpha^2 t_1}{[1+(\alpha-1) \beta]^2}\left(\frac{1}{t_1}+2 A L \beta \phi-(m-1) v \Delta_\psi \phi+2(m-1) v \frac{|\nabla \phi|^2}{\phi}\right)\right)^{1 / 2} .$
Since
\begin{eqnarray*}
    \frac{[(\alpha-1) \beta]^{1 / 2}}{[1+(\alpha-1) \beta]^2} \leq \frac{1}{2}\frac{1+(\alpha-1) \beta}{[1+(\alpha-1) \beta]^2} \leq \frac{1}{2}
\end{eqnarray*}
and
%%%%%%%%%%%%
$$
\begin{aligned}
\frac{a \alpha^2 t_1}{[1+(\alpha-1) \beta]^2}\left(\frac{1}{t_1}+2 \widetilde{A} L \beta \right) &=\frac{a \alpha^2}{[1+(\alpha-1) \beta]^2}+\frac{2 a \alpha^2 t_1 \widetilde{A} L \beta }{[1+(\alpha-1) \beta]^2} \\
& \leq a \alpha^2+\frac{2 a \alpha^2 t_1\widetilde{A} L}{\alpha-1} \frac{(\alpha-1) \beta}{[1+(\alpha-1) \beta]^2} \\
& \leq a \alpha^2\left(1+\frac{\widetilde{A} L t_1}{2(\alpha-1)}\right) ,
\end{aligned}
$$
we can estimate the right-hand side of \eqref{moto-nashi-1} as 
$$
\begin{aligned}
 G^{1/2}(x,T) \leq &G^{1 / 2}(x_1,t_1)\\
  \leq &\frac{a\alpha^2 m L^{1/2}t_1^{1/2}}{(\alpha-1)^{1/2}(m-1)^{1/2}} \cdot \frac{\sqrt{C_1}}{R}+\left\{a \alpha^2\left(1+\frac{\widetilde{A} L t_1}{2(\alpha-1)}\right)\right.\\
&\left.+\frac{a \alpha^2 t_1}{[1+(\alpha-1) \beta]^2}\left[\frac{C_2L}{R^2}\left\{1 + \sqrt{K}R \coth\left(\frac{\sqrt{cK}}{p_2}R\right)\right\}+\frac{2 C_1 L}{R^2}\right]\right\}^{1 / 2} \\
\leq & \frac{a\alpha^2 m L^{1/2}T^{1/2}}{(\alpha-1)^{1/2}(m-1)^{1/2}} \cdot \frac{\sqrt{C_1}}{R} \\
&+a^{1 / 2} T^{1 / 2} \alpha\left[\frac{1}{T}+\frac{\widetilde{A} L}{2(\alpha-1)}+\frac{(C_2 + 2C_1)L}{R^2}\left\{1 + \sqrt{K}R \coth\left(\frac{\sqrt{cK}}{p_2}R\right)\right\}\right]^{1 / 2} 
\end{aligned}
$$
for any $x \in B(p, R)$. Hence, we get
$$
\begin{aligned}
T^{1 / 2} F_\alpha(x, T)^{1 / 2} \leq & \frac{a\alpha^2 m L^{1/2}T^{1/2}}{(\alpha-1)^{1/2}(m-1)^{1/2}} \cdot \frac{\sqrt{C_1}}{R} \\
&+a^{1 / 2} T^{1 / 2} \alpha\left[\frac{1}{T}+\frac{\widetilde{A} L}{2(\alpha-1)}+\frac{C_3 L}{R^2}\left\{1 + \sqrt{K}R \coth\left(\frac{\sqrt{cK}}{p_2}R\right)\right\}\right]^{1 / 2} ,
\end{aligned}
$$
where $C_3 = 2C_1 + C_2$.
Dividing both sides by $T^{1/2}$, we have
$$
\begin{aligned}
F_\alpha(x, T) \leq & \Biggl\{\frac{a\alpha^2 m L^{1/2}}{(\alpha-1)^{1/2}(m-1)^{1/2}}  \frac{\sqrt{C_1}}{R}\\
&+ a^{1/2}\alpha \left[\frac{1}{T}+\frac{\widetilde{A} L}{2(\alpha-1)}+\frac{C_3L}{R^2}\left\{1 + \sqrt{K}R \coth\left(\frac{\sqrt{cK}}{p_2}R\right)\right\}\right]^{1 / 2}\Biggr\}^2 .
\end{aligned}
$$
Since $T$ is arbitrary, we obtain the desired inequality when $K > 0$. The case of $K = 0$ follows by taking the limit as $K\rightarrow 0$.
\qed

Letting $R\rightarrow \infty$ in Theorem \ref{aronson-benilan-epsilon-range}, we have the following estimate.
\begin{corollary}\label{ab-est-eps}
    Let $(M,g,\mu)$ be an $n$-dimensional complete non-compact weighted Riemannian manifold, $u$ be a positive solution of \eqref{porus_medium_equation} with $m > 1$ on $M\times [0,T]$ and 
    $N \in (-\infty,\frac{-2}{m-1}) \cup [n,+\infty]$. 
    We assume $\Ric_\psi ^N \geq -K \e^{\frac{4(\ez-1)}{n-1}\psi}$ on $M$ with $\ez \in \R$ in the $\ez$-range \eqref{epsilin-range}, $K \geq 0$
    and
    \[ p_1 \le \e^{-\frac{2(\ez-1)}{n-1}\psi} \le p_2 \] 
    with $p_2 \ge p_1>0$ in $M$. 
    Let $v = \frac{m}{m-1}u^{m-1}$ and $L = (m-1)\sup_{M\times[0,T]}v$. Then for any $\alpha > 1$, we have 
    \begin{equation*}
        \frac{|\nabla v|^2}{v} - \alpha\frac{\partial_t v}{v} \leq a\alpha^2 \left( \frac{1}{t} + \frac{K}{p_1^2}\frac{L}{2(\alpha-1)} \right)^2
    \end{equation*}
    on $M\times (0,T]$. In the special case where $K = 0$, we have
    \begin{equation}\label{ab-est-eps-eq}
        \frac{|\nabla v|^2}{v} -  \frac{\partial_t v}{v} \leq \frac{a}{t}
    \end{equation}
    on $M\times (0,T]$.
\end{corollary}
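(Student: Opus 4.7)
The plan is to derive this non-compact estimate as a limit $R\to\infty$ of the local estimate of Theorem \ref{aronson-benilan-epsilon-range}. Fix any reference point $p\in M$ and any $R>0$; since the global hypotheses on $\Ric_\psi^N$ and $\e^{-\frac{2(\eps-1)}{n-1}\psi}$ hold on all of $M$, they certainly hold on $B(p,2R)$, so Theorem \ref{aronson-benilan-epsilon-range} applies on this ball. Writing $L_R:=(m-1)\sup_{B(p,2R)\times[0,T]}v$, we have $L_R\leq L$, and since the right-hand side of the local estimate is monotone non-decreasing in $L_R$, we may replace $L_R$ by $L$ at the cost of a larger bound.

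Next I would check that every $R$-dependent contribution in the local bound vanishes as $R\to\infty$. The prefactor $\frac{a\alpha^2 m L^{1/2}}{(\alpha-1)^{1/2}(m-1)^{1/2}}\cdot\frac{C}{R}$ tends to $0$. Inside the bracketed expression, the curvature-cut-off contribution $\frac{CL}{R^2}\{1+\sqrt{K}R\coth(\sqrt{cK}R/p_2)\}$ also vanishes: for $K>0$ we have $\coth(\sqrt{cK}R/p_2)\to 1$, so the expression is asymptotic to $\frac{CL}{R^2}+\frac{CL\sqrt{K}}{R}\to 0$; when $K=0$ one argues as in the theorem by taking $K\searrow 0$. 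Given any $(x,t)\in M\times(0,T]$, choose $R>d(p,x)$, evaluate the local estimate at $(x,t)$ and pass to the limit $R\to\infty$ to obtain
\begin{equation*}
\frac{|\nabla v|^2}{v}-\alpha\frac{\partial_t v}{v}\leq a\alpha^2\left(\frac{1}{t}+\frac{K}{p_1^2}\frac{L}{2(\alpha-1)}\right)
\end{equation*}
for every $\alpha>1$.

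For the special case $K=0$, the second summand inside the parenthesis disappears and the inequality reduces to $F_\alpha\leq a\alpha^2/t$ for every $\alpha>1$. Letting $\alpha\searrow 1$ then gives \eqref{ab-est-eps-eq}. I do not expect any substantive obstacle: the argument is a direct passage to the limit, and the only points that deserve a brief remark are the asymptotics $\coth(\sqrt{cK}R/p_2)\to 1$ (which ensures the curvature-cut-off term decays like $1/R$ rather than staying bounded away from zero), together with the implicit requirement that the global supremum $L$ be finite so that the limiting bound is meaningful.
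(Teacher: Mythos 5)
Your argument is correct and is exactly the intended route: the paper's ``proof'' of this corollary is the single sentence ``Letting $R\to\infty$ in Theorem~\ref{aronson-benilan-epsilon-range},'' and you have simply fleshed out the details (applying the local estimate on $B(p,2R)$ for $R>d(p,x)$, replacing $L_R$ by $L$ via monotonicity, and observing that the $C/R$ prefactor and the cut-off term $\frac{CL}{R^2}\{1+\sqrt{K}R\coth(\sqrt{cK}R/p_2)\}$ both vanish as $R\to\infty$, using $\coth(\sqrt{cK}R/p_2)\to 1$ when $K>0$ and the $K\searrow 0$ limit otherwise). One small point worth flagging: your limit computation yields
\[
\frac{|\nabla v|^2}{v}-\alpha\frac{\partial_t v}{v}\le a\alpha^2\left(\frac{1}{t}+\frac{K}{p_1^2}\frac{L}{2(\alpha-1)}\right),
\]
\emph{without} the outer square appearing in the corollary as printed. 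This is in fact the correct form: the printed square appears to be a typo, since (i) unwinding $\bigl\{a^{1/2}\alpha[\cdots]^{1/2}\bigr\}^2$ from the theorem gives $a\alpha^2[\cdots]$ with no extra square, (ii) the stated special case $K=0$ gives $a/t$ rather than $a/t^2$, and (iii) Remark~2 asserts that this corollary recovers Corollary~\ref{aronson-benilan-estimate}, which has no square. So your derivation proves the intended statement; you could have noted explicitly that it differs from the printed display.
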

\begin{remark}\rm{
    When $N \in[n, \infty)$, $\varepsilon=1$ and $p_1=p_2=1$, we have $c=\frac{1}{N-1}$, in which case, Theorem \ref{aronson-benilan-epsilon-range} recovers Theorem \ref{aronson-benilan-huang} and Corollary \ref{ab-est-eps} recovers Corollary \ref{aronson-benilan-estimate}.
    The reason why we considered the local Aronson-B\'{e}nilan gradient estimate under variable lower curvature bounds with $\eps$-range is that the Laplacian comparison theorem does not hold under constant lower bounds of the $N$-weighted Ricci curvature with $N < 0$.}
\end{remark}
\section{ Aronson-B\'{e}nilan estimate under non-negative curvature}
%In\cite{lili-2}, accordinf to \cite{villani}, they define 
%\begin{equation*}
%    \mathscr{N}_u(t) = -t^a\int_M vu\d \mu,
%\end{equation*}
%where $u$ is a solution of \eqref{porus_medium_equation}, $v = \frac{m}{m-1}u^{m-1}$ and $a$ is taken as \eqref{a-def}.
%This $\mathscr{N}$ is used to define $W$-entropy associated with \eqref{porus_medium_equation}. In \cite{villani}, \cite{huang}, \cite{lili-2}, the monotonicity of $\mathscr{N}_u(t)$ is obtained. 

In this Section 4, we show the Aronson-B\'{e}nilan estimate under non-negative $N$-weighted Ricci curvature with $N \in \left(-\infty, - \frac{2}{m-1}\right)$ without using the Laplacian comparison theorem.
The following theorem is an alternative approach to Corollary \ref{ab-est-eps} with $K = 0$.
\begin{theorem}\label{ab-est-cpt}
    Let $(M,g,\mu)$ be an $n$-dimensional compact weighted Riemannian manifold, $u$ be a positive solution of \eqref{porus_medium_equation} with $m > 1$ on $M\times [0,T]$ and
     $N \in (-\infty,\frac{-2}{m-1}) \cup [n,+\infty]$. We assume $\Ric_{\psi}^{N} \geq 0$. Then we have
    \begin{equation*}
        \frac{|\nabla v|^2}{v} - \frac{\partial_t v}{v} \leq \frac{a}{t}
    \end{equation*}
    on $M\times (0,T]$, where $v = \frac{m}{m-1}u^{m-1}$ and $a = a(m,N)$ defined in \eqref{a-def}.
\end{theorem}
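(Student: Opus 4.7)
The plan is to exploit compactness of $M$ to avoid the cut-off function $\phi$ used in the proof of Theorem~\ref{aronson-benilan-epsilon-range}, and simultaneously to specialize to $\alpha=1$ so that the weighted Bochner term $-\tfrac{1}{a}[(m-1)\Delta_\psi v]^2$ from Lemma~\ref{basic_lemma_porus_medium} becomes exactly $-F_1^2/a$. With this choice the proof collapses to a one-variable argument: the auxiliary function $G=tF_1$ will satisfy $G\le a$ everywhere by a single application of the parabolic maximum principle, without invoking any Laplacian comparison.

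First I would apply Lemma~\ref{basic_lemma_porus_medium} with $\alpha=1$. Since $F_1=-(m-1)\Delta_\psi v$, the term $[(m-1)\Delta_\psi v]^2$ is exactly $F_1^2$, the term $(1-\alpha)(\partial_t v/v)^2$ vanishes, and the curvature term is non-positive by $\Ric_\psi^N\ge 0$. Hence on all of $M\times[0,T]$,
$$
\mathscr{L}(F_1)\le -\frac{F_1^2}{a}+2m\langle\nabla v,\nabla F_1\rangle.
$$
Next, I would set $G:=tF_1$ and argue by the maximum principle on the compact parabolic cylinder $M\times[0,T]$. If the maximum of $G$ on this cylinder is non-positive, then $F_1\le 0\le a/t$ and there is nothing to prove. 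Otherwise $G$ attains a positive maximum at some $(x_1,t_1)$ with $t_1>0$; at that point $\nabla G=0$ forces $\nabla F_1(x_1,t_1)=0$, while $\Delta_\psi G(x_1,t_1)\le 0$ and $\partial_t G(x_1,t_1)\ge 0$ yield $\mathscr{L}(G)(x_1,t_1)\ge 0$. A direct computation gives $\mathscr{L}(G)=G/t+t\,\mathscr{L}(F_1)$, so combining with the displayed inequality and $\nabla F_1(x_1,t_1)=0$,
$$
0\le \frac{G(x_1,t_1)}{t_1}-\frac{G(x_1,t_1)^2}{a\,t_1}.
$$
Dividing by $G(x_1,t_1)/t_1>0$ yields $G(x_1,t_1)\le a$, whence $tF_1(x,t)\le a$ on $M\times(0,T]$.

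The main obstacle is essentially bookkeeping rather than analytic depth: one needs to check that Lemma~\ref{basic_lemma_porus_medium} applies at $\alpha=1$ (the lemma is stated for arbitrary $\alpha\in\mathbb{R}$, so this is clean), that positivity of $u$ on $[0,T]$ yields smoothness of $v$, $F_1$ and $G$, and that a maximum attained at the endpoint $t_1=T$ still produces $\mathscr{L}(G)\ge 0$ via $\partial_t G\ge 0$. Since $T$ is arbitrary the estimate propagates to all positive time, giving the conclusion.
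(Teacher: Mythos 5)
Your proposal is correct and follows essentially the same route as the paper's proof: apply Lemma~\ref{basic_lemma_porus_medium} with $\alpha=1$, drop the curvature term via $\Ric_\psi^N\ge 0$, set $G=tF_1$, and conclude by the parabolic maximum principle on the compact cylinder, using $\nabla G=0$ to kill the gradient coupling term and the resulting quadratic inequality $0\le G/t_1 - G^2/(at_1)$ to obtain $G\le a$. The only cosmetic difference is that you name the auxiliary function $G$ rather than $F$ and write the Bochner term directly as $F_1^2$; otherwise the argument is identical.
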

\begin{proof}
    The case $N\in [n,\infty)$ is obtained by \cite[Theorem 7.6]{lili-2}. 
    We can apply the argument in \cite[Theorem 7.6]{lili-2} to generalize it to weaker curvature bounds $\Ric_\psi^N\geq 0$ with $N < 0$.
    %By applying the arguments in \cite[Theorem 7.6]{lili-2}, we obtain the desired inequality. 
    By Lemma \ref{basic_lemma_porus_medium}, we have 
    \begin{eqnarray}
        \mathscr{L}(F_1)&\leq& -\frac{1}{a}[(m-1)\Delta_\psi v]^2 + 2m\langle \nabla v,\nabla F_1 \rangle - 2(m-1)\Ric_\psi^N(\nabla v)\nonumber\\
        &\leq& -\frac{1}{a}[(m-1)\Delta_\psi v]^2  + 2m\langle \nabla v,\nabla F_1 \rangle,\label{eq-for-F_1}
    \end{eqnarray}
    where we used $\Ric_\psi^N\geq 0$ in the second inequality.
    Setting $F = tF_1$, we have 
    \begin{eqnarray}
        \mathscr{L}(F) &=& t\mathscr{L}(F_1) + F_1\nonumber\\
        &\leq& -\frac{1}{a}\frac{F^2}{t} + 2m\langle \nabla F, \nabla v \rangle + \frac{F}{t}.\label{eq-for-F}
    \end{eqnarray}
    We apply the maximum principle to $F$ on $M\times [0,T]$. Let $(x_1,t_1)\in M \times [0,T]$ be a point which attains the maximum of $F$ and we assume $F(x_1,t_1) > 0$ (otherwise the proof is trivial), which implies $t_1 > 0$.
    Since we have
    \begin{equation*}
        \nabla F(x_1,t_1) = 0, \quad \Delta F(x_1,t_1) \leq 0, \quad \partial_t F(x_1,t_1) \geq 0,
    \end{equation*}
    we obtain
    \begin{equation*}
        \mathscr{L}(F)(x_1,t_1) \geq 0.
    \end{equation*}
     Combining this with \eqref{eq-for-F}, we have
    \begin{equation*}
        0\leq -\frac{1}{a}\frac{F(x_1,t_1)^2}{t_1} + \frac{F(x_1,t_1)}{t_1}.
    \end{equation*}
    Therefore, we find
    \begin{equation*}
        F(x,t) \leq F(x_1,t_1) \leq a
    \end{equation*}
    for all $(x,t)\in M\times [0,T]$.
    Since $F_1 = \frac{|\nabla v|^2}{v} - \frac{\partial_t v}{v}$, we obtain the desired inequality.
\end{proof}
    %For the Aronson-B\'{e}nilan estimate, we also refer to \cite[Proposition 11.12]{vazque1} for a detailed proof.
\begin{remark}\label{fast-diffusion}\rm{
    Although the Aronson-B\'{e}nilan estimate for the fast diffusion equation (i.e. $0 < m < 1$) is also well investigated in \cite{huang-huang-li,huang}, the method to prove the Aronson-B\'{e}nilan estimate for the fast diffusion equation cannot be applied to $\Ric_\psi^N$ with $N < 0$.
    This could be a natural phenomenon since the admissible range $(-\infty,\frac{-2}{m-1})$ for $N$ degenerates as $m\searrow 1$, as we mentioned in the Introduction. 
}
\end{remark}
\textbf{Acknowledgement}

I would like to express my sincere gratitude to my supervisor Shin-ichi Ohta for his support, encouragement and for providing many fruitful suggestions and comments on preliminary versions of this paper.

\end{document}